\theoremstyle{plain}
	\newtheorem{Theo}{Theorem}[section] 
	\newtheorem{Prop}[Theo]{Proposition}        
	\newtheorem{Lem}[Theo]{Lemma}            
	\newtheorem{Conj}[Theo]{Conjecture}
\theoremstyle{definition}
	\newtheorem{Def}[Theo]{Definition}
        \newtheorem{Ex}[Theo]{Exemple}
	\newtheorem{Defs}[Theo]{Definitions}
\theoremstyle{remark}
	\newtheorem{Rema}[Theo]{Remark}
	\newtheorem*{Merci}{Acknowledgments}	      
\def\RR{{\mathbb R}}    
\def\QQ{{\mathbb Q}}    
\def\CC{{\mathbb C}}    
\def\SL{{SL_{2}(\mathbb R)}}
\def\GA{{\Gamma}}
\title{Central points of the double heptagon translation surface are not connection points.}
\author{Julien Boulanger \footnote{julien.boulanger@univ-grenoble-alpes.fr}}
\begin{document}

\maketitle

\begin{abstract}
We consider flow directions on the translation surfaces formed from double $(2n+1)$-gons, and give a sufficient condition in terms of a natural gcd algorithm for a direction to be hyperbolic in the sense that it is the fixed direction for some hyperbolic element of the Veech group of the surface. In particular, we give explicit points in the holonomy field of the double heptagon translation surface which are not so-called connection points.  Among these are the central points of the heptagons, giving a negative answer to a question by P.~Hubert and T.~Schmidt \cite{HSRef}.
\end{abstract}

\section{Introduction and statement of the results.}
A translation surface is a genus $g$ topological surface with an atlas of charts on the surface minus a finite set of points such that all transition functions are translations. These surfaces can also be described as the surfaces obtained by gluing pairs of opposite parallel sides of a collection of euclidean polygon by translations. Such surfaces arise naturally in the study of billiard table dynamics : the Katok-Zemlyakov unfolding procedure, which consists in reflecting the billiard every time the trajectory hits an edge instead of reflecting the trajectory, replaces the billiard flow on a polygon by a directional flow on isometric translation surfaces. The study of translation surfaces has been flourishing, with major recent advances such as the results in \cite{EMM}, \cite{EFW} or \cite{EMMW} but there still remains various open questions, for instance in the area of Veech groups. One of these questions is to characterize so-called connection points, for which little is known for translation surfaces whose trace field is of degree $3$ or more over $\QQ$. In this paper we look at two particular points of the double heptagon surface, whose trace field is cubic over $\QQ$, and show that they are not connection points. For surveys about translation surfaces see \cite{Zo06}, \cite{Wr14}, and for Veech groups see \cite{HS}. \newline

Before looking at connection points, one needs to understand better parabolic (resp. hyperbolic) directions, that is directions fixed by a parabolic (resp. hyperbolic) element of the Veech group. For Veech surfaces, periodic directions, saddle connection directions and directions fixed by parabolic elements of the Veech group coincide. For these terms, see the background and \cite{HS}. For translation surfaces whose trace field is quadratic or $\QQ$, C.~McMullen showed in \cite{Mc03} that (after a natural normalization) the periodic directions are exactly those with slopes in the trace field. When the trace field is of higher degree, it is no longer true and the periodic directions in general form a proper subset of the directions whose slope belong to the trace field. D.~Davis and S.~Lelièvre \cite{DaLe} characterized the parabolic directions for the double pentagon surface using a gcd algorithm. Their results can be directly extended to the (2n+1)-gon which has a holonomy field of degree $n$ over $\QQ$. \newline

In this paper we use the algorithm to characterize hyperbolic directions whose slope belong to the trace field for each double $(2n+1)$-gon surface, which are made of two copies of a $(2n+1)$-gon with parallel opposite sides glued together. We find explicit examples of such directions for the double-heptagon. This allows us to prove that central points of the double heptagon are not connection points, see Theorem \ref{Co}. This answers negatively a question of P.~Hubert and T.~Schmidt. Recall that the central points of the double heptagon are the centers of the heptagons. A nonsingular point of a translation surface is called a connection point if every separatrix passing through this point can be extended to a saddle connection. In fact, the author do not know any example of non periodic connection point\footnote{A point is \textit{periodic} if its orbit under the action of the affine group is finite, otherwise it is non periodic, see \cite{HS04}.} for a translation surface whose trace field is of degree $3$ over $\QQ$ or higher.\newline


\begin{Theo}\label{Th}
Let $n \geq 2$, for the double $(2n+1)$-gon surfaces, directions which ends in a periodic sequence (of period $\geq 2$) for the gcd algorithm are hyperbolic directions.
\end{Theo}

\begin{Prop}[Double heptagon case]\label{Pro}
For the double-heptagon surface, there are hyperbolic directions in the trace field.
\end{Prop}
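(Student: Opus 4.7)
The plan is to apply Theorem \ref{Th}: it suffices to exhibit a direction whose slope lies in the trace field $\mathbb{Q}(2\cos(\pi/7))$ and whose expansion under the gcd algorithm is eventually periodic with period at least $2$. Since the trace field of the double heptagon is a cubic extension of $\mathbb{Q}$, there is no analogue of Lagrange's theorem guaranteeing such periodicity a priori, so the core task is genuinely to produce an explicit direction and certify periodicity by hand.

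First, I would make the gcd algorithm of Davis--Lelièvre fully explicit in the case $n=3$, writing slopes in the basis $(1,\lambda_7,\lambda_7^2)$ with $\lambda_7=2\cos(\pi/7)$. Each step of the algorithm corresponds to applying a generator of the Hecke group $H_7$ (a rotation or a parabolic shear) so as to reduce the vector to a normalized ``fundamental domain'' form, recording the associated digit. A vector gives rise to an eventually periodic sequence if and only if, after finitely many steps, the normalized vector returns to a previously seen normalized vector; this is a purely finite combinatorial check.

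Second, I would guide the search for a suitable slope by the geometry of the double heptagon, trying slopes of vectors joining distinguished points (vertices, midpoints of sides, and the two central points), whose coordinates in the basis $(1,\lambda_7,\lambda_7^2)$ admit a compact algebraic description. For each candidate, I would run the algorithm and look for the first repeated normalized vector; once a repetition is detected the verification of periodicity reduces to a finite, checkable computation.

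Finally, once a candidate direction $\theta$ is exhibited together with its periodic gcd expansion of period $p\geq 2$, Theorem \ref{Th} immediately produces a hyperbolic element of the Veech group $H_7$ fixing $\theta$, which proves the proposition. The main obstacle is the second step: in the absence of a structural reason for cubic slopes to yield periodic expansions, one has to find the right candidate essentially by geometric intuition and computer experimentation, and then package the result in a form that can be verified by a short symbolic computation. The fact that the candidate can be chosen to correspond to the central point of the heptagon is what will make Corollary \ref{Co} accessible.
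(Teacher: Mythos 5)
Your strategy is exactly the one the paper follows: reduce Proposition \ref{Pro} to Theorem \ref{Th} and certify a single explicit direction in $\QQ(2\cos(\pi/7))$ whose gcd expansion is eventually periodic, with the candidate chosen through a separatrix of a central point so that Corollary \ref{Co} comes along for free. The paper does precisely this in the staircase model: it transports a separatrix emanating from a central point to the staircase via an explicit transition matrix $T$, runs the sector-based gcd algorithm of Davis--Leli\`evre, observes a periodic tail, and then writes down the resulting hyperbolic matrix $M$ explicitly as a product of six sector matrices, checking that its trace is $2+a^2>2$ (with $a=2\cos(\pi/7)$) and conjugating back by $T$.

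The gap is that your proposal stops at the point where the paper's proof actually begins. The proposition is an existence statement with no structural reason behind it (as you note, there is no Lagrange-type theorem in the cubic case, and for the hendecagon the analogous search is conjectured to fail), so the entire mathematical content of the proof is the explicit witness: a concrete slope, the finite list of sectors visited, the detected repetition, and the resulting hyperbolic element of the Veech group. You describe how you would search for such a witness and how you would verify it once found, but you do not produce one, so as written the argument does not establish the proposition; it only reduces it to a computation whose success is not guaranteed by anything you prove. To close the gap you would need to supply the data the paper supplies: an explicit direction (the paper uses the slope $\sin(\pi/7)\bigl(-\tfrac{2}{3}\cos^2(\pi/7)+2\cos(\pi/7)-\tfrac{4}{3}\bigr)$ of a separatrix through a central point), its gcd itinerary, and the corresponding product of sector matrices exhibited as a hyperbolic element of the Veech group. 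One further small point: the sector matrices $M_i$ of the staircase algorithm are not literally the standard Hecke generators (a rotation and a shear), so if you phrase the algorithm in terms of $H_7$ generators you should either verify the translation between the two expansions or work directly with the $M_i$, as the paper does.
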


This proposition is already known from \cite{AS09} and \cite{HMTY}, where they use a different method. Our method provides an answer to the question of central points as connection points, which was not known.

\begin{Theo}\label{Co}
Central points of the double heptagon are not connection points.
\end{Theo}

Moreover, one can look at double $(2n+1)$-gons with more sides. For example, the same result holds for the double nonagon :

\begin{Theo}\label{Conona}
Central points of the double nonagon are not connection points.
\end{Theo}

Moreover, different tests we conducted suggests the following conjecture, which is not new since we found the same ideas in \cite{HMTY}.

\begin{Conj}\label{Hepta}
For the double-heptagon and the double-nonagon, all the directions in the trace field are either parabolic or hyperbolic.
\end{Conj}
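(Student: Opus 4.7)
The plan is to use the gcd algorithm from Davis--Lelièvre as the central tool, combined with Theorem \ref{Th}. Let $\omega$ be a direction with slope in the trace field $\mathbb{Q}(\lambda_7)$. Representing $\omega$ by a pair of coordinates in the ring of integers $\mathbb{Z}[\lambda_7]$ and iterating the gcd algorithm, one obtains a sequence of pairs $(a_i,b_i)$ in $\mathbb{Z}[\lambda_7]^2$. The desired dichotomy would follow from two ingredients: showing that this sequence is either finite (terminates in a pair with second coordinate zero) or eventually periodic, and checking that each outcome corresponds to the asserted dynamical behaviour.

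The second ingredient is essentially in hand. If the sequence terminates, then the direction is a saddle connection direction, which on a Veech surface forces parabolicity via the Veech dichotomy. If the sequence is eventually periodic with period $\geq 2$, Theorem \ref{Th} applies directly and yields hyperbolicity. The degenerate case of a period-one cycle can either be ruled out directly (a fixed pair of the elementary step corresponds to a $\lambda_7$-rational of a very restricted form) or absorbed into the terminating case by inspection of the algorithm.

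The first ingredient --- that every gcd orbit starting from a trace field slope is eventually periodic --- is the genuine content and the reason the statement is still only a conjecture. The natural strategy is to embed $\mathbb{Q}(\lambda_7)$ into $\mathbb{R}^3$ via its three real places and to track all three Galois conjugates of $(a_i,b_i)$ simultaneously. If one can show the conjugate triples remain in a bounded region of $\mathbb{R}^6$, then the discreteness of $\mathbb{Z}[\lambda_7]$ as a lattice in $\mathbb{R}^3$ forces the sequence to take only finitely many values and hence to be eventually periodic.

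The main obstacle is precisely this boundedness. In the quadratic case treated by McMullen, the analogous bound is essentially automatic, which is why every trace field direction is parabolic there. In the cubic case, however, the unit group of $\mathbb{Z}[\lambda_7]$ has rank two, so one Galois conjugate may grow while the others shrink, and nothing a priori forces the algorithm to control this growth. A reasonable attack is to interpret each gcd step as multiplication by an explicit element of the Hecke group $H_7$, to follow this action on all three embeddings, and to argue that the integer-part choices made by the algorithm systematically land the state back in a fundamental domain for the unit action. Carrying this out rigorously --- and ruling out the possibility that some exotic trace field slope escapes to infinity along a non-compact direction in the space of conjugates --- is what would upgrade the conjecture to a theorem.
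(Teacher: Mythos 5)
You have not given a proof; you have given a (correct) reduction plus an honest admission that the decisive step is missing, and that is exactly where the matter stands --- the statement is a \emph{conjecture} in the paper, supported only by computer experiments, and the paper offers no proof for you to be compared against. Your reduction is essentially the right bookkeeping: termination of the gcd algorithm at the horizontal direction gives parabolicity by Theorem \ref{DL}, and an eventually periodic tail gives hyperbolicity by Theorem \ref{Th} (the period-one case with a cone index $i\notin\{0,2n\}$ is in fact also hyperbolic, since $M_i$ then has strictly positive entries and its Perron eigenvalue exceeds $1$, so it needs no separate treatment). But the dichotomy ``every orbit with slope in the trace field either terminates or is eventually periodic'' is precisely the open content, and nothing in your proposal establishes it. Note moreover that the companion Conjecture \ref{Conj} predicts that this dichotomy \emph{fails} for the double hendecagon, whose trace field is quintic; so no soft argument of the kind you sketch (bounding all Galois conjugates and invoking discreteness of $\ZZ[\lambda_7]$ in $\RR^3$) can work uniformly --- any proof must exploit something specific to degree $3$, or to the heptagon itself, and your outline does not identify what that would be.

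Two smaller points. First, the boundedness you call ``essentially automatic'' in the quadratic case is not how McMullen's theorem is proved (his Theorem 5.1 is obtained by quite different means), so the analogy gives little guidance on how to control the rank-two unit group in the cubic case; as you yourself note, one conjugate can grow while the others shrink, and the gcd steps are dictated by the cone containing the real direction, with no visible mechanism forcing the conjugate data back into a fundamental domain for the unit action. Second, even granting eventual periodicity, one should check that ``periodic'' here never degenerates into an infinite tail of $M_0$'s or $M_{2n}$'s for a non-horizontal, non-vertical trace-field direction; this is easy, but it belongs in the argument rather than in a parenthetical. In sum: the strategy is a reasonable research program, consistent with the paper's framework, but there is a genuine gap --- the eventual periodicity of the gcd algorithm on trace-field slopes --- and closing it is exactly what would turn this conjecture into a theorem.
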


What is interesting is that these results don't seem to genralize to the double hendecagon for example. In fact, for the double hendecagon we couldn't find any direction in the trace field which ends in a periodic sequence. These questions will be discussed in Section \ref{Last}.\newline


\begin{Merci}
I'm grateful to Erwan Lanneau for all the explanations and the discussions and for the many remarks about preliminary versions of this paper. I would like to thank Samuel Lelièvre for the discussions and his help about Sage, Curt McMullen for interesting questions and remarks, and the anonymous reviewer as well for careful reading and helpful suggestions.
\end{Merci}

\section{Background}
A \emph{translation surface} $(X,\omega)$ is a real compact genus $g$ surface $X$ with an atlas $\omega$ such that all transition functions are translations except on a finite set of singularities $\Sigma$, along with a distinguished direction. Alternatively, it can be seen as a surface obtained from a finite collection of polygons embedded in $\CC$ by gluing pairs of parallel opposite sides by translation. We get a surface $X$ with a flat metric and a finite number of singularities. We define $X' = X - \Sigma$, which inherits the translation structure of $X$ and defines a Riemannian structure on $X'$. Therefore, we have notions of geodesics, length, angle, and geodesic flow (called directional flow). This allows us make the following definitions, which will be useful in section 4.

\begin{Defs}
(i) A \textit{separatrix} is a geodesic line emanating from a singularity. \newline
(ii) A \textit{saddle connection} is a separatrix connecting singularities without any singularities on its interior.\newline
(iii) A nonsingular point of the translation surface is called a \textit{connection point} if every separatrix passing through this point can be extended to a saddle connection.
\end{Defs}



The action of $GL_2^+(\RR)$ on polygons induces an action on the moduli space of translation surfaces (see for example \cite{Zo06}). Two surfaces are affinely equivalent if they lie in the same orbit. The stabilizer of a given translation surface X is called the \emph{Veech group} of X, and is denoted by $SL(X)$. In particular, affinely equivalent surfaces have conjugated Veech group. As well as introducing the notion (altough not the name) W.A.~Veech showed in \cite{Ve89} that they are discrete subgroups of $\SL$. Hence, we can classify elements of the Veech group (and thus affine diffeomorphisms) into three types : elliptic ($ | \mathrm{tr}(Df) | < 2$), parabolic ($ | \mathrm{tr}(Df) | = 2$) and hyperbolic ($ | \mathrm{tr}(Df) | > 2$). Any element of the Veech group induces a diffeomorphism of the surface. Such diffeomorphisms are called \emph{affine diffeomorphisms}.

\paragraph{Trace field}
The \emph{trace field} of a group $\GA \subset \SL$ is the subfield of $\RR$ generated over $\QQ$ by $\{ \mathrm{tr}(A), A \in \GA \}$. One defines the trace field of a translation surface to be the trace field of its Veech group. \newline

Let $X$ be a genus $g$ translation surface. We have the following theorems :

\begin{Theo}[see \cite{KS00}]
The trace field of $X$ has degree at most $g$ over $\QQ$. \newline
Assume the Veech group of $X$ contains a hyperbolic element $A$. Then the trace field is exactly $\QQ[tr(A)]$.
\end{Theo}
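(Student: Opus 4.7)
My plan is to exploit the interplay between the derivative of an affine diffeomorphism on $\RR^2$ and its induced action on $H_1(X;\ZZ) \cong \ZZ^{2g}$. For $A \in SL(X)$, choose an affine diffeomorphism $f$ with $Df = A$; since $f$ preserves the area form and orientation, the induced map $f_*$ on $H_1(X;\ZZ)$ lies in $Sp_{2g}(\ZZ)$, and the holonomy $\mathrm{hol} : H_1(X;\ZZ) \to \RR^2$ satisfies $\mathrm{hol} \circ f_* = A \circ \mathrm{hol}$. In particular the eigenvalues $\lambda, \lambda^{-1}$ of $A$ appear among the eigenvalues of $f_*$.

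This already bounds a single trace: $\chi_{f_*}$ is an integral reciprocal polynomial of degree $2g$, so the substitution $y = x + x^{-1}$ collapses $\chi_{f_*}(x)/x^g$ to an integral polynomial of degree $g$ vanishing at $tr(A) = \lambda + \lambda^{-1}$, giving $[\QQ(tr(A)):\QQ] \leq g$. To promote this to the full trace field $K$, I would equip $V := \mathrm{hol}(H_1(X;\ZZ)) \otimes_\ZZ \QQ \subset \RR^2$ with a natural $K$-module structure of $K$-rank $2$, obtained by decomposing $H_1(X;K)$ under the Veech group representation into Galois-conjugate summands and projecting to the summand corresponding to the tautological embedding $K \hookrightarrow \RR$. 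Combined with $\dim_\QQ V \leq 2g$ this would yield $[K:\QQ] \leq g$.

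For the second assertion the inclusion $\QQ[tr(A)] \subseteq K$ is automatic; the content is the reverse. Once $V$ is a $K$-module of rank $2$ on which every $B \in \GA$ acts $K$-linearly, I would diagonalize $A$ over $K[\lambda]$, exhibiting $V \otimes_K K[\lambda]$ as the sum of the $\lambda$- and $\lambda^{-1}$-eigenlines. Any $B \in \GA$ then has $K[\lambda]$-valued matrix coefficients relative to this eigenbasis, so $tr(B)$ is a sum of two elements of $K[\lambda]$ which is invariant under the nontrivial Galois involution $\lambda \leftrightarrow \lambda^{-1}$, placing it in the fixed field $\QQ[\lambda + \lambda^{-1}] = \QQ[tr(A)]$. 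Hence $K \subseteq \QQ[tr(A)]$.

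The hardest step is the $K$-module structure and the check that it has $K$-rank exactly $2$: the factor of $2$ in $2g \geq 2[K:\QQ]$ reflects the pair of eigendirections of a hyperbolic element and requires a genuine use of the Veech group as a whole, not a single element. A subtlety worth addressing separately is the case where $\GA$ contains no hyperbolic elements: then $K = \QQ$, Part 2 is vacuous, and the degree bound becomes trivial, but the $K$-module construction should still degenerate gracefully.
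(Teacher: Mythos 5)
The paper offers no proof of this statement --- it is quoted as background from \cite{KS00} --- so the comparison is with the standard Kenyon--Smillie argument. Your first step is correct and is indeed the classical one: $f_*\in Sp_{2g}(\ZZ)$, its characteristic polynomial is an integral reciprocal polynomial of degree $2g$, and the substitution $y=x+x^{-1}$ gives $[\QQ(tr(A)):\QQ]\le g$. But this only bounds $\QQ(tr(A))$, whereas the first assertion concerns the whole trace field $K$ with no hyperbolic element assumed, and the device you propose for that --- an action of $K$ on $V=\mathrm{hol}(H_1(X;\QQ))$ of $K$-rank exactly $2$, obtained by splitting $H_1(X;K)$ into Galois-conjugate summands and projecting to the tautological one --- is precisely the crux and is not justified. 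Such a splitting (the tautological plane being defined over $K$, with an invariant complement) is a nontrivial theorem available for lattice Veech groups, not for an arbitrary translation surface, and invoking it here is close to circular; moreover the field that naturally acts on $V$ is the \emph{holonomy field} $k$ of \cite{KS00}, which contains $K$ but can be strictly larger when $SL(X)$ has no hyperbolic element, so the claimed $K$-rank $2$ can fail. Your fallback remark that $K=\QQ$ when there is no hyperbolic element is also wrong: elliptic elements of a Veech group have traces of the form $2\cos(\pi p/q)$, which are typically irrational. The published route is more elementary: $K\subseteq k$ because for $B\in\GA$ Cayley--Hamilton in $SL_2$ gives $B+B^{-1}=tr(B)\,\mathrm{Id}$, so $tr(B)\cdot V\subseteq V$; and $[k:\QQ]\le g$ because $V$ is a $k$-vector space of dimension at least $2$ inside a $\QQ$-vector space of dimension at most $2g$. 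No decomposition of the homology representation is needed.

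For the second assertion there is a further, independent gap. Even granting your $K$-module structure, diagonalizing $A$ over $K[\lambda]$ only tells you that the matrix entries of $B$ in the eigenbasis lie in $K[\lambda]$; invariance of $tr(B)$ under the involution $\lambda\leftrightarrow\lambda^{-1}$ then places it in the fixed field of that involution \emph{over $K$}, which is $K[\lambda+\lambda^{-1}]=K$ itself (since $tr(A)\in K$), not $\QQ[tr(A)]$. So this step proves only $tr(B)\in K$, a tautology. To reach $\QQ[tr(A)]$ one needs $\QQ(\lambda)$-rationality of the relevant data --- in \cite{KS00} it is the $A$-invariant holonomy module that forces the eigenline decomposition, and hence the traces, to be defined over $\QQ(\lambda)$ --- and this is exactly where the translation-surface structure enters; some such input is unavoidable, since the statement is false for general Fuchsian groups. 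As it stands, the proposal establishes $[\QQ(tr(A)):\QQ]\le g$, but neither the degree bound for the full trace field nor the equality $K=\QQ[tr(A)]$.
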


It is a classical result (see for instance \cite{Th88}) that after a normalization, there exists an atlas such that every parabolic direction has its slope in the trace field and every connection point has coordinates in the trace field. Specifically in the quadratic case, we have the following result : 

\begin{Theo}[\cite{Mc03}, Theorem 5.1, see also \cite{Bo88}]
If the trace field is quadratic over $\QQ$ then every direction whose slope lies in the trace field is parabolic.
\end{Theo}

\section{Hyperbolic directions for the double $(2n+1)$-gon}

I.~Bouw and M.~Möller in \cite{BM10} gave a large class of Veech surfaces.   W.~P.~Hooper gave a geometric interpretation of these surfaces in \cite{Ho12} and proved in particular that the double $(2n+1)$-gon is affinely equivalent to a staircase polygonal model. See also \cite{DD14}, \cite{DPU} and \cite{Mo05}. See Figure \ref{fig3} for the double heptagon's staircase model. We will use this model to construct the gcd algorithm at the heart of this paper which is a direct generalization of that described in \cite{DaLe} in the setting of the double pentagon. For more results on the double pentagon, see also \cite{DFT}. \newline

\begin{figure}[h]
\center
\includegraphics[height = 4cm]{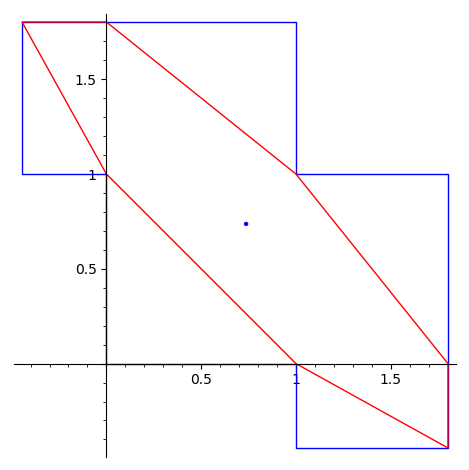}
\caption{The staircase model for the double heptagon (In red we show one of the two heptagons).}
\label{fig3}
\end{figure}

The staircase model can be constructed as follows :
Let each $R_i, i = 1,..., 2n-1$ be the rectangle of side $\sin(\frac{i\pi}{2n+1})$ and $\sin(\frac{(i+1) \pi}{2n+1})$. Glue $R_i$ and $R_{i+1}$ such that edges of the same size are glued together, each side being glued to the opposite side of the other rectangle as shown in Figure \ref{rect}. Parallel edges of $R_1$ (resp. $R_{2n-1}$) that are not glued to an edge of another rectangle are glued together.

\begin{figure}[h]
\center
\includegraphics[height = 4cm]{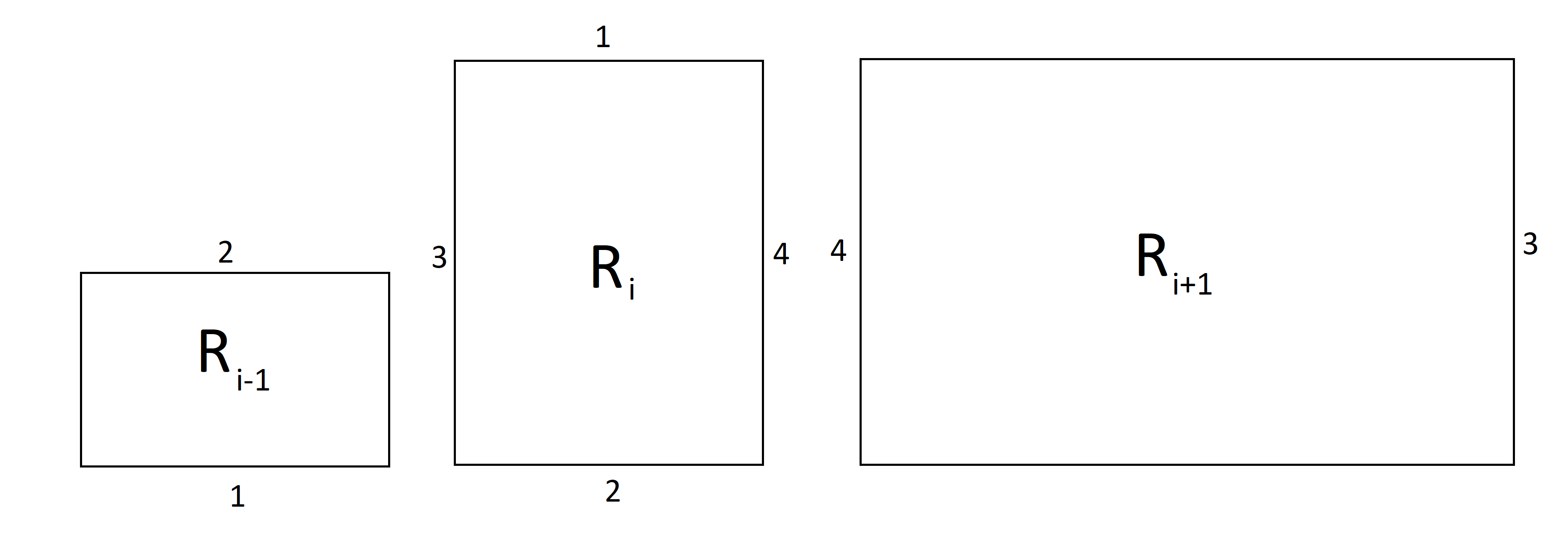}
\caption{How to glue the rectangles $R_i$. Each edge of $R_i$ is glued to the one with the same number in $R_{i-1}$ or $R_{i+1}$.}
\label{rect}
\end{figure}

It is then an easy calculation to establish the following lemma, which in fact is a particular case of lemma 6.6 from \cite{DD14} (see also \cite{Ve89}).

\begin{Lem}
Let $n\geq 2$ be an integer. Then in the staircase model for the double $(2n+1)$-gon translation surface, there is a horizontal (resp. vertical) decomposition into cylinders such that all cylinders have modulus equal to $a_n = 2 \cos(\frac{\pi}{2n+1})$.
\end{Lem}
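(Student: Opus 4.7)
The plan is to exhibit an explicit horizontal (and then vertical) cylinder decomposition of the staircase and to compute the modulus of each cylinder by means of a single trigonometric identity. Set $\theta = \pi/(2n+1)$ and $s_i = \sin(i\theta)$. The two ingredients I will use are the sum-to-product identity
\[
s_{i-1} + s_{i+1} = 2\cos(\theta)\, s_i = a_n\, s_i,
\]
together with the palindromic symmetry $s_{2n+1-i} = s_i$.

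I will first fix an orientation of each $R_i$ compatible with the alternating adjacencies of the staircase: for odd $i$ the rectangle $R_i$ has horizontal dimension $s_i$ and vertical dimension $s_{i+1}$, and for even $i$ the two roles are swapped. With this convention, for every $k = 1, \ldots, n-1$ the rectangles $R_{2k-1}$ and $R_{2k}$ share the same vertical dimension $s_{2k}$ and are glued along their two vertical edges (each to the opposite edge of its neighbour, as prescribed by the construction). Their union is therefore a horizontal cylinder of height $s_{2k}$ and circumference $s_{2k-1} + s_{2k+1}$; the identity applied with $i = 2k$ immediately gives modulus $a_n$.

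The one remaining rectangle is $R_{2n-1}$, which by the palindromic symmetry has horizontal dimension $s_{2n-1} = s_2$ and vertical dimension $s_{2n} = s_1$. Its two horizontal edges are identified with the corresponding edges of $R_{2n-2}$, so the remaining free pair consists of its two vertical edges of length $s_1$, which are parallel and must therefore be glued to each other by the staircase boundary rule. Hence $R_{2n-1}$ forms by itself a horizontal cylinder of circumference $s_2$ and height $s_1$, whose modulus equals $s_2/s_1 = \sin(2\theta)/\sin(\theta) = 2\cos(\theta) = a_n$. Together with the $n-1$ paired cylinders this yields $n$ horizontal cylinders, all of modulus $a_n$. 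The vertical case is obtained verbatim after interchanging the two coordinate directions, with $R_1$ playing the role of $R_{2n-1}$.

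The genuine difficulty is not the arithmetic but the bookkeeping at the two ends of the staircase: one has to check carefully that, with the alternating orientations above, the side-by-side pairs $R_{2k-1} \cup R_{2k}$ really are exactly the horizontal cylinders, and that the self-gluings of the unpaired edges of $R_1$ and $R_{2n-1}$ close each of these two extremal rectangles into a cylinder in precisely one of the two directions (horizontal for $R_{2n-1}$, vertical for $R_1$), and not both. Once this combinatorial verification is in place, the modulus computation is a one-line application of the sum-to-product identity and of $\sin(2\theta) = 2\sin(\theta)\cos(\theta)$.
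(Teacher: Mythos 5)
Your decomposition and modulus computation are correct, and this is exactly the ``easy calculation'' the paper has in mind (it omits the proof, citing Lemma 6.6 of \cite{DD14}): the pairs $R_{2k-1}\cup R_{2k}$ plus the lone end rectangle give the $n$ horizontal cylinders, and the identity $\sin((i-1)\theta)+\sin((i+1)\theta)=2\cos(\theta)\sin(i\theta)$ together with $s_{2n-1}/s_{2n}=s_2/s_1=2\cos(\theta)$ gives modulus $a_n$ for each. The combinatorial bookkeeping you flag is indeed the only thing to check, and your argument for it (the free edges of $R_{2n-1}$, resp. $R_1$, are the vertical, resp. horizontal, ones, so each closes into a cylinder in exactly one direction) is sound.
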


In fact, for computationnal reasons it will be more convenient to rescale the staircase by a factor $\frac{1}{sin(\frac{n\pi}{2n+1})}$ so that each side can be expressed in the trace field and the longer side has lengh $1$.\newline

Let us now look at the short diagonals of the staircase. We get $2n-1$ short diagonal vectors denoted by $D_i, i \in \llbracket 1,2n-1 \rrbracket$. We set $D_0$ to be the shortest horizontal vector and $D_{2n}$ the shortest vertical vector. We rescale such that $D_0$ and $D_{2n}$ are length $1$ vectors. 
We drew the diagonals in a graph as shown in Figure \ref{fig4} for the double heptagon ($n=3$). All the $D_i$'s have euclidean norm bigger than 1 (except $D_0$ and $D_{2n}$ with norm equal to $1$).

\begin{figure}[h]
\center
\includegraphics[height = 4cm]{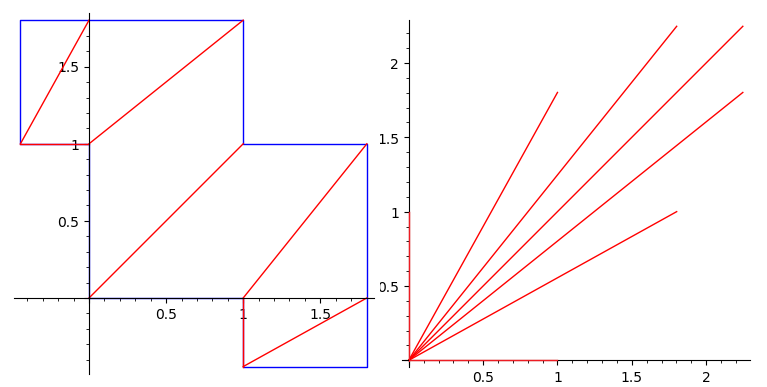}
\caption{The diagonals of the double heptagon staircase divides the positive cone into 6 subcones. The diagonals are rescaled so that $D_0$ and $D_{2n}$ are length $1$ vectors. We have $D_0 = (1,0)$, $D_1 = (a_3,1)$, $D_2 = (a_3^2-1,a_3)$, $D_3 = (a_3^2-1,a_3^2-1)$, and the other diagonals are symetrical about the first bisector.}
\label{fig4}
\end{figure}

Let $M_i$, $i  \in \llbracket 0,2n-1 \rrbracket$ be the matrix that maps $D_0 = (1,0)$ to $D_i$ and $D_{2n} = (0,1)$ to $D_{i+1}$. Let $\Sigma$ denote the first quadrant, and $\Sigma_i$ its image under $M_i$ (we include $D_i$ in $M_i$). The matrix $M_i$ is in the Veech group of the staircase and is associated to an affine homeomorphism of the staircase surface which we still denote by $M_i$. This homeomorphism sends parabolic (resp. hyperbolic) directions \footnote{Here and thoughout, we mean by direction an element of the projective line $\mathbb{P}(\RR^2)$.} to parabolic (resp. hyperbolic) directions which are in the $i^{th}$ cone. Iterating this process, we obtain a way to construct new parabolic (resp. hyperbolic) directions once we have found one. Conversely, we have a gcd algorithm given by the following definition.


\begin{Def}[gcd algorithm\footnote{The name comes from the geometric version of Euclide algorithm for the torus.} for the staircase model]
Given a direction in the first quadrant as entry, apply the following procedure : \newline
1) If the direction lies in the $i^{th}$ cone, apply $M_i^{-1}$.\newline
2) If the direction is neither horizontal nor vertical, go back to step 1.
\end{Def}

The following theorem is due to D.Davis and S.Lelièvre. It is stated in \cite{DaLe} in the case of the double-pentagon but the same arguments can be directly extended to the double (2n+1)-gon.
\begin{Theo}[\cite{DaLe}]\label{DL}
A direction on the double $(2n+1)-$gon is parabolic if and only if the gcd algorithm terminates at the horizontal direction.
\end{Theo}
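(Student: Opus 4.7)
The statement is an ``if and only if''; only the \emph{parabolic $\Rightarrow$ terminates} direction requires real work.

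\emph{The easy direction.} By the previous lemma the horizontal direction admits a cylinder decomposition with common modulus $a_n$, so the corresponding shear is an affine diffeomorphism with parabolic derivative fixing $D_0$; hence the horizontal direction is parabolic. Each $M_i$ lies in the Veech group and thus sends parabolic directions to parabolic directions. If the algorithm applied to $v$ visits the cones $\Sigma_{i_1},\Sigma_{i_2},\dots,\Sigma_{i_k}$ before landing on $D_0$, then
\[
v = M_{i_1}M_{i_2}\cdots M_{i_k}(D_0)
\]
is parabolic.

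\emph{The hard direction.} The plan is to use the length $\ell(v)$ of the shortest saddle connection in direction $v$ as a strict monovariant. Because $M_i^{-1}$ is linear and lifts to an affine diffeomorphism of $X$ inducing a bijection on saddle-connection vectors, every saddle-connection vector in direction $v$ gets rescaled by the common factor $|M_i^{-1}u_v|$, where $u_v$ is the unit vector in direction $v$; in particular the shortest one maps to the shortest one and
\[
\ell\bigl(M_i^{-1}(v)\bigr) = |M_i^{-1}u_v|\cdot \ell(v).
\]
So strict decrease of $\ell$ reduces to $|M_i x| > |x|$ for $x := M_i^{-1}u_v \in \Sigma$. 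Since the columns of $M_i$ are $D_i$ and $D_{i+1}$, writing $x = (a,b)$ gives
\[
|M_i x|^2 - |x|^2 = a^2(|D_i|^2 - 1) + b^2(|D_{i+1}|^2 - 1) + 2ab\,\langle D_i, D_{i+1}\rangle,
\]
with each term nonnegative (all $|D_j|\geq 1$, and $D_i, D_{i+1}$ lie in the closed first quadrant so their inner product is nonnegative and in fact strictly positive). The cross term is strictly positive whenever $ab>0$, and on the boundary rays of $\Sigma$ strict inequality still survives except in the two degenerate cases $(i=0,\,b=0)$ and $(i=2n-1,\,a=0)$, which correspond precisely to the fixed points $v=D_0$ and $v=D_{2n}$ of the iteration. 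Consequently, starting from any direction that is neither horizontal nor vertical, the algorithm produces a strictly decreasing sequence $\ell(v_0)>\ell(v_1)>\cdots$ inside the discrete set of saddle-connection lengths of $X$; this sequence must be finite, and so in finitely many steps the algorithm reaches one of the two terminal directions.

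\emph{Main obstacle.} The technical point I anticipate most care is the identity $\ell(M_i^{-1}v)=|M_i^{-1}u_v|\cdot\ell(v)$: one must actually know that $M_i$ lifts to an affine diffeomorphism of the surface inducing a bijection on saddle-connection vectors, and not merely that it is a matrix relating the short diagonals. Once this is in place the monotonicity argument is routine. A minor bookkeeping issue is that the algorithm may first reach the vertical $D_{2n}$ rather than the horizontal $D_0$; this is reconciled by the quarter-turn $\bigl(\begin{smallmatrix}0 & -1\\ 1 & 0\end{smallmatrix}\bigr)\in H_{2n+1}$, which lies in the Veech group and identifies the two terminal directions.
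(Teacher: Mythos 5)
Your argument is correct in substance, but it is worth noting that the paper itself offers no proof of this statement: it is quoted from Davis--Leli\`evre, whose treatment (for the double pentagon) runs through the sector decomposition viewed as a continued-fraction--type expansion and the identification of parabolic directions with the cusp orbit of the Veech (Hecke) group. Your route is a self-contained geometric descent: parabolic $\Rightarrow$ periodic $\Rightarrow$ there is a saddle connection in that direction, and the quantity $\ell(v)$ strictly decreases under each step of the algorithm because every $M_i$ expands the closed first quadrant (your norm computation with columns $D_i$, $D_{i+1}$ is exactly right, given the paper's facts that $|D_j|\geq 1$ with equality only for $j=0,2n$ and that $M_i\in SL(X)$). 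This buys a proof that makes no reference to the group structure beyond the $M_i$ and the horizontal shear, and it extends verbatim to all $(2n+1)$-gons, which is precisely the level of generality the paper needs.

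Three points should be tightened. First, you use $\ell(v)$ without saying why it is defined: you need the standard fact (Veech dichotomy, or Veech's theorem that an eigendirection of a parabolic element is completely periodic) that a parabolic direction contains saddle connections; state it, since it is the only place where ``parabolic'' enters the hard direction. Second, ``a strictly decreasing sequence inside a discrete set must be finite'' is not literally true (consider $\{1/n\}$); what you need is the standard counting fact that a translation surface has only finitely many saddle connections of length at most $R$ for every $R$, so the set of lengths below $\ell(v_0)$ is finite. Third, the vertical endpoint: the quarter-turn $\bigl(\begin{smallmatrix}0&-1\\1&0\end{smallmatrix}\bigr)$ lies in $H_{2n+1}$, which is the Veech group of the double polygon model, not literally of the staircase (they are conjugate by the transition matrix), so as stated that remark needs care; an easier fix is to observe that with the paper's convention that the ray through $D_j$ belongs to $\Sigma_j$, the only preimage of the vertical direction under an algorithm step is the vertical itself, so a non-vertical direction can only terminate at the horizontal, and the vertical starting direction is handled separately (it is parabolic by the same cylinder lemma). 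With these repairs your proof is complete.
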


This theorem gives the first possibility for this algorithm to end. The other possibility would be an eventually periodic ending, i.e if we apply the algorithm a certain number of times the direction we get is a direction we already got in a previous step. Here we characterize these directions in the trace field and we prove Theorem \ref{Th}, which can be stated more formally in the following way :
\begin{Theo}
The gcd algorithm is eventually periodic for a direction $\theta$ (which is neither horizontal nor vertical)  in the trace field if and only if $\theta$ is the image by a matrix $M_{i_k} ... M_{i_1}$ of an eigendirection for a hyperbolic matrix of the form $M_{j_1}...M_{j_l}$. In particular, every eventually periodic direction for the gcd algorithm is an eigendirection for a hyperbolic matrix of the Veech group.
\end{Theo}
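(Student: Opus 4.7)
The plan is to prove the two implications of the equivalence separately.

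For the direct implication, let the algorithm applied to $\theta$ produce the directions $\theta \in \Sigma_{i_k}$, $M_{i_k}^{-1}\theta \in \Sigma_{i_{k-1}}$, $\ldots$, reaching after $k$ steps a direction $\theta^* = M_{i_1}^{-1}\cdots M_{i_k}^{-1}\theta$ that then cycles through cones $\Sigma_{j_1}, \Sigma_{j_2}, \ldots, \Sigma_{j_l}$ and returns to $\theta^*$. Thus $\theta = M_{i_k}\cdots M_{i_1}\theta^*$ and $A\theta^* = \theta^*$ as directions, where $A = M_{j_1}M_{j_2}\cdots M_{j_l}$. The substantive step is showing that $A$ is hyperbolic. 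Since every $M_i$ sends the closed first quadrant $\overline{\Sigma}$ into $\overline{\Sigma}_i$, the product $A$ maps $\overline{\Sigma}$ into itself; and since $\theta$ is neither horizontal nor vertical and each $M_i^{-1}$ carries the open cone to the open cone, $\theta^*$ lies strictly inside $\Sigma$. Hence $A$ has a real fixed direction in the interior of the invariant cone, ruling out the elliptic case. To exclude the parabolic case, I conjugate so that $A$ acts on the line of directions as a translation $x\mapsto x+c$ with $c\neq 0$, fixing $\infty$; the arc $\overline{\Sigma}$ then corresponds to the complement of some bounded open interval $(a,b)$, and the inclusion $A\overline{\Sigma} \subset \overline{\Sigma}$ forces $(a,b) \subset (a+c, b+c)$, i.e.\ $c = 0$, a contradiction. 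Thus $A$ is hyperbolic with eigendirection $\theta^*$, which is the desired form.

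For the converse, take $\theta^*$ to be the attracting eigendirection of the hyperbolic matrix $A = M_{j_1}\cdots M_{j_l}$; it lies in $A\overline{\Sigma} \subset \overline{\Sigma}_{j_1}$, hence in $\Sigma_{j_1}$. Applying the algorithm to $\theta^*$, the first step yields $M_{j_1}^{-1}\theta^* = M_{j_2}\cdots M_{j_l}\theta^*$ (using $A\theta^* = \theta^*$ as directions), which lies in $\Sigma_{j_2}$; iterating, after exactly $l$ steps one returns to $\theta^*$, so $\theta^*$ is periodic and consequently $\theta = M_{i_k}\cdots M_{i_1}\theta^*$ is eventually periodic. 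The condition that $\theta$ belongs to the trace field is automatic, since the eigendirections of any hyperbolic element of the Veech group have slope in $\QQ[\mathrm{tr}(A)]$, which coincides with the trace field by the theorem of Kenyon--Smillie recalled above, and the $M_i$ have entries in this same field. Finally, the corollary is immediate by conjugation: the element $B = (M_{i_k}\cdots M_{i_1}) A (M_{i_k}\cdots M_{i_1})^{-1}$ is hyperbolic (same trace as $A$) and fixes $\theta$.

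The main obstacle will be the parabolic-exclusion step in the direct implication. Once that geometric lemma is in place, the rest reduces to careful bookkeeping of how the subcones $\Sigma_i$ compose under iterated application of the matrices $M_i$, together with the standard observation that the attracting eigendirection of a hyperbolic matrix preserving a convex cone must lie in the interior of that cone, which makes the cycle of cones $\Sigma_{j_1},\ldots,\Sigma_{j_l}$ well-defined.
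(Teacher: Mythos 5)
Your proof of the equivalence is correct and follows the same outline as the paper's, but the key step --- hyperbolicity of the periodic product $A=M_{j_1}\cdots M_{j_l}$ --- is handled by a genuinely different mechanism. The paper argues metrically: every $M_j$ dilates lengths of vectors in the first quadrant, so the real eigenvalue of $A$ along the periodic direction is strictly larger than $1$, and with $\det A=1$ this forces $|\mathrm{tr}(A)|>2$. You argue projectively: a real fixed direction interior to the invariant arc rules out the elliptic case, and your translation argument on $\mathbb{RP}^1$ (a map conjugate to $x\mapsto x+c$ cannot send the complement of a bounded open interval into itself unless $c=0$) rules out the parabolic case. Both are sound; your version avoids quantifying the dilation and makes visible why the degenerate cycles consisting only of the horizontal (or only of the vertical) shear are excluded by the hypothesis that $\theta$ is neither horizontal nor vertical --- a point the paper only treats in the converse. (Do note that ``each $M_i^{-1}$ carries the open cone to the open cone'' needs the convention that a direction equal to some diagonal $D_i$ is sent to the horizontal and the algorithm then terminates, i.e.\ the parabolic case of Theorem \ref{DL}; this is consistent with reading ``eventually periodic'' as in Theorem \ref{Th}, with period at least $2$.) Your converse --- attracting eigendirection, nested cones $A\overline{\Sigma}\subset\overline{\Sigma}_{j_1}$, reading off the cycle of sectors --- is essentially the paper's Perron--Frobenius argument, and your conjugation remark cleanly handles the ``in particular'' clause, which the paper leaves implicit.

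There is, however, one genuine error: the claim that membership of $\theta$ in the trace field is ``automatic'' because eigendirections of hyperbolic elements of the Veech group have slope in $\QQ[\mathrm{tr}(A)]$. This is false in general: the fixed slopes of a hyperbolic $A$ with entries in a field $K$ satisfy a quadratic equation over $K$ with discriminant $\mathrm{tr}(A)^2-4$, so they typically lie in a proper quadratic extension of the trace field --- compare $\begin{pmatrix}2&1\\1&1\end{pmatrix}$ in $SL_2(\ZZ)$, whose fixed slopes are $(1\pm\sqrt{5})/2$ while the trace field is $\QQ$. The Kenyon--Smillie theorem recalled in the paper only identifies the trace field with $\QQ[\mathrm{tr}(A)]$; it says nothing about eigendirections. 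Indeed, were your claim true, Proposition \ref{Pro} (existence of hyperbolic directions in the trace field for the double heptagon) would be trivial and Conjecture \ref{Conj} (no such directions for the double hendecagon) would be false. Fortunately nothing in your argument depends on it: the trace-field condition on $\theta$ is simply a standing hypothesis carried along (the paper's proof never uses it either), so you should delete the ``automatic'' remark and state the equivalence for directions in the open first quadrant, with the trace-field restriction imposed on $\theta$ from the outset.
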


\begin{proof}
If $\theta$ is eventually periodic for the gcd algorithm, let $k$ denote the length of the preperiod of $\theta$. Then, we have matrices $M_{i_1},..., M_{i_k}$ such that $\theta'= (M_{i_k}...M_{i_1})^{-1}(\theta)$ is periodic for the  algorithm.  That is, there exist $M_{j_1},..., M_{j_l}$ such that $M_{j_1}...M_{j_l} (\theta') = \theta'$. Then $M = M_{j_1}...M_{j_l}$ is indeed a hyperbolic matrix since all $M_j$s dilates lengths in the first quadrant, which means that the eigenvalue of $M_{j_1}...M_{j_l}$ for the direction $\theta'$ has to be strictly bigger than $1$. Moreover, $M$ belongs to the Veech group, being a product of elements of the Veech group. 

Conversely, let us suppose there is $i_1, ..., i_k, j_1, ..., j_l$ such that $M_{j_1}...M_{j_l}(\theta') = \theta'$, where $M = M_{j_1}...M_{j_l}$ is hyperbolic and $\theta = M_{i_k}...M_{i_1}(\theta')$. First, it is clear that $\theta'$ belongs to the first quadrant by the Perron-Frobenius theorem since all the matrices $M_i$ have positive entries, and that the only sequences $j_1, ... j_l$ such that $M= M_{j_1}...M_{j_l}$ have possible zero entries are if $j_1 = ... = j_l = 0$ or $j_1 = ... = j_l = 2n$, which gives a matrix $M$ that is parabolic and not hyperbolic. Thus, $\theta$ belongs to the first quadrant as well because the $M_i$'s are contractions of the first quadrant. Moreover, at every step $q$, $M_{i_q}...M_{i_1}(\theta')$ belongs to the first quadrant. By construction of the gcd algorithm, it follows that applying the gcd algorithm to the direction $\theta$ leads to $\theta'$ after $k$ steps. By the same argument, since $M_{j_1}...M_{j_l}(\theta')=\theta'$ and $\theta'$ belongs to the first quadrant, we conclude that the sequence $j_l, ..., j_1$ is exactly the sequences of indices we would have got if we would have applied the algorithm to $\theta'$, and that $\theta'$ is a periodic direction for the gcd algorithm. Hence, $\theta$ is an eventually periodic direction for the gcd algorithm. 
\end{proof}

\begin{Rema}
A point worth noting is that the sequence of sectors along the algorithm allows us to construct the matrix $M$ which stabilizes the original direction. This will allow us, for the double-heptagon, to find a separatrix whose direction is eventually periodic for the gcd algorithm and hence is not parabolic, which means that the separatrix does not extend to a saddle connection.
\end{Rema}

\begin{Rema}
This theorem implies that eventually periodic directions for the gcd algorithm are hyperbolic directions, but the converse is not necessarily true. However, we guess that for the double heptagon surface this gives all hyperbolic directions and, moreover, all directions in the trace field are either hyperbolic or parabolic.
\end{Rema}

\begin{Ex}
For the gcd algorithm on the double heptagon : \newline
- The direction of slope $a_3^2 + 1$ is $2$-periodic and fixed by the hyperbolic matrix $M_5 M_0$. \newline
- The direction of slope $-\frac{33}{29}a_3^2 + \frac{3}{29} a_3 + \frac{103}{29}$ is $28$-periodic and fixed by the hyperbolic matrix $M_5^{12} M_4^2 M_0^{12} M_2 M_0$. \newline
\end{Ex}

\section{Connection points}\label{Connex}
In this section, we finally show that central points of the double heptagon are not connection points. We first give some motivation to their study. \newline

Connection points have been studied in \cite{HS04} by P.~Hubert and T.~Schmidt who gave a construction of translation surfaces with infinitely generated Veech groups as branched covers over non-periodic connection points. C.~McMullen proved in \cite{Mc06} the existence of these points in the case of a quadratic trace field, and implicitely showed that the connection points are exactly the points with coordinates in the trace field. But in higher degree there is no such result, neither concerning connection points nor about infinitely generated Veech groups. One of the easiest non-quadratic surfaces is the double-heptagon whose trace field is of degree 3 over $\QQ$. P.Arnoux and T.Schmidt implicitely showed (see \cite{AS09}) that for the double heptagon surface there are points with coordinates in the trace field that are not connection points. Still, it was not known whether or not central points of the double heptagon were connection points. We provide here a negative answer to this question.\newline

By definition, for proving that a point is not a connection point, it suffices to find a separatrix passing throught it which cannot be extended to a saddle connection, for instance because this lies in a hyperbolic direction. We managed to find such a separatrix for a central point, which is drawn in figure \ref{fig7}. Of course, both central points plays a symetric role, so it suffices to consider either one of the central points.

\begin{figure}[h]
\center
\includegraphics[height = 4cm]{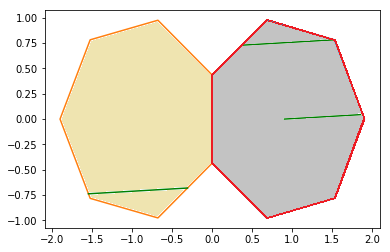}
\caption{The green separatrix, passing through one of the central points with slope $\sin(\frac{\pi}{7}) (-\frac{8}{3} \cos(\frac{\pi}{7})^2 + 4\cos(\frac{\pi}{7}) - \frac{4}{3})$, does not extend to a saddle connection.}
\label{fig7}
\end{figure}

We are now able to prove Proposition \ref{Pro}. More precisely :

\begin{Prop}\label{Green_sepa}
The green separatrix in Figure \ref{fig7} has a hyperbolic direction.
\end{Prop}

\begin{proof}
Let us work with the staircase model. Recall that it is affinely equivalent to the double heptagon model. The transition matrix is given by $T = 
\begin{pmatrix}
\cos(\frac{\pi}{7}) +1  & \cos(\frac{\pi}{7})+1 \\
-\sin(\frac{\pi}{7}) & \sin(\frac{\pi}{7})
\end{pmatrix}$. In this setting, we get Figure \ref{fig8} and the slope of the new green direction is $\frac{3}{13} a^2 + \frac{6}{13}a -\frac{1}{13}$, where $a = a_3 = 2 cos(\frac{\pi}{7})$.\newline

\begin{figure}[h]
\center
\includegraphics[height = 4cm]{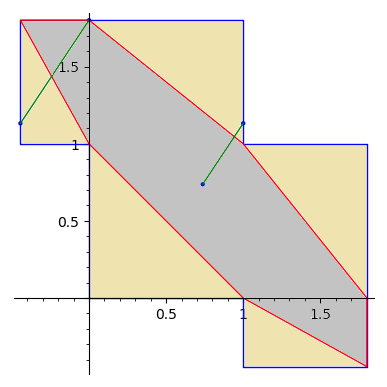}
\caption{The same green separatrix in the staircase model does not extend to a saddle connection.}
\label{fig8}
\end{figure}

We apply the gcd algorithm to the green direction and notice that it ends in a periodic sequence of directions, which means the green direction is fixed by a hyperbolic matrix of the Veech group , namely
\begin{equation*}
 M= M_4^2M_5 M_1(M_4^{-1})^2 =
\begin{pmatrix}
-34 a^2 - 26 a + 19 & 22 a^2 + 21 a - 14 \\
-50 a^2 - 41 a + 28 & 35 a^2 + 26 a - 17
\end{pmatrix}\end{equation*}
It follows that $M$ is hyperbolic (of trace $2 + a^2$) and belongs to the Veech group. Explicitely,
\begin{equation*}
M = 
\begin{pmatrix}
a &  1 \\
a^2 - 1 & a
\end{pmatrix}
\begin{pmatrix}
a &  1 \\
a^2 - 1 & a
\end{pmatrix}
\begin{pmatrix}
1 &  0 \\
a & 1
\end{pmatrix}
\begin{pmatrix}
1 &  a \\
0 & 1
\end{pmatrix}
\begin{pmatrix}
a &  - 1 \\
- a^2 + 1 & a
\end{pmatrix}
\begin{pmatrix}
a &  - 1 \\
- a^2 + 1 & a
\end{pmatrix}
\end{equation*}

Finally, going back to the Veech group of the double-heptagon model, we get that $TMT^{-1}$ fixes the green direction of Figure \ref{fig7}, which is then a hyperbolic direction.


\end{proof}

It follows from this proof that the central points are not connection points, since the green separatrix of Figure \ref{fig7}, having a hyperbolic direction, cannot be extended to a saddle connection. This proves Theorem \ref{Co}.

\begin{Rema}
The green separatrix used for the proof is not the only separatrix passing through one of the central points whose direction is hyperbolic. For example, one could have taken the separatrix of figure \ref{DirMc} which is hyperbolic and fixed (in the staircase model) by the matrix $S M_5^3 M_0 M_5^{-2} S^{-1}$. Here $S$ is the half turn $\begin{pmatrix} 0 & -1 \\ 1 & 0 \end{pmatrix}$ in the Veech group. 
\end{Rema}

\begin{figure}[h]
\center
\includegraphics[height = 3.5cm]{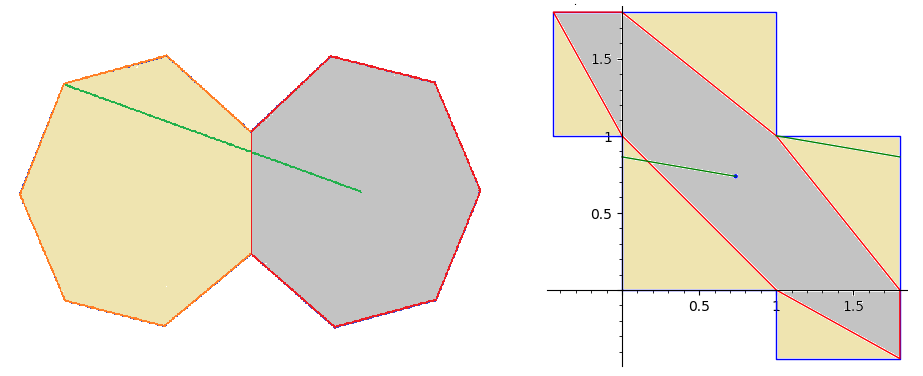}
\caption{Another example of a speratrix whose direction is hyperbolic and in the trace field.}
\label{DirMc}
\end{figure}

\section{Further directions}\label{Last}

In the previous sections we looked at a "gcd" algorithm defined for all $(2n+1)$-gons and used it for the case of the double heptagon to show that the central points are not connection points. One can ask what happens if we look at double $(2n+1)$-gons with more sides. It appears that the same result holds for the double nonagon. More precisely :

\begin{Prop}
The green direction of figure \ref{nona} is hyperbolic. Hence the central points of the double nonagon are not connection points.
\end{Prop}
\begin{proof}
The proof is similar to the case of the double heptagon. We work with the staircase model and use the gcd algorithm to find a separatrix passing through one of the central points whose direction is hyperbolic. It appears that the green direction of Figure \ref{nona}, starting at a singularity with slope $a_4^2 + 2 a_4 + 1$ and reaching one of the central point is hyperbolic and fixed by the matrix :
\begin{equation*}
M = M_0^4M_5 M_7^2 = \begin{pmatrix} 23a_4^2 + 12a_4 -1 & 9 a_4 + 4 \\ 5 a_4 + 3 & a_4^2 - 1 \end{pmatrix}
\end{equation*}
Where $a_4 = 2\cos(\frac{\pi}{9})$ and the $M_i$'s correspond to the matrix of the algorithm for the double nonagon staircase. Namely :
\begin{equation*}
M_0 = \begin{pmatrix} 1 & a_4 \\ 0 & 1 \end{pmatrix} \text{ ,  } M_5 = \begin{pmatrix} a_4^2 -1 & a_4 \\ a_4+1 & a_4^2 -1 \end{pmatrix} \text{ ,  } M_7 = \begin{pmatrix} 1 & 0 \\ a_4 & 1 \end{pmatrix} 
\end{equation*}
\end{proof}

\begin{figure}[h]
\center
\includegraphics[height = 6cm]{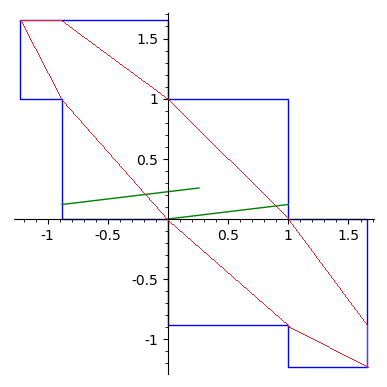}
\caption{The green separatrix in the staircase model for the double nonagon does not extend to a saddle connection.}
\label{nona}
\end{figure}

Conversely, we conducted tests for the double hendecagon but found no directions with periodic ending.
This is closely related to Remark 9 of \cite{HMTY} made in the setting of $\lambda$-continued fractions for Hecke groups, saying the authors didn't find any hyperbolic direction in the trace field for $11 \leq 2n+1 \leq 29$. The interpretation in our setting relies on Veech having shown in \cite{Ve89} that the Veech group of the double $(2n+1)$-gon is conjugated to the Hecke group $H_{2n+1}$ \footnote{for $´k \geq 3$, $H_{k} = < \begin{pmatrix} 0 & -1 \\ 1 & 0 \end{pmatrix} ; \begin{pmatrix} 1 & \lambda_{k} \\ 0 & 1 \end{pmatrix} >$, where $\lambda_k = 2\cos(\frac{\pi}{k})$} \footnote{While the Veech group of the $2n$-gon is conjugated to a subgroup of order $2$ of the Hecke group $H_{2n}$.}. In particular, we do not know whether central points of the double hendecagon are connection points or not. See also \cite{AS09} and \cite{CS13} for related results.\newline

Moreover, the study of directions in the double heptagon and the double nonagon had shown that there are either parabolic or hyperbolic directions in the trace field. But could there be something else ? It is \textit{a priori} possible that the algorithm doesn't terminate for a given direction. In fact, our tests suggests this doesn't happen in those cases, which leads to a precised version of conjecture \ref{Hepta} :

\begin{Conj} 
For the double heptagon and the double nonagon, every direction in the trace field terminates for the gcd algorithm. In particular, every direction in the trace field would be either parabolic or hyperbolic.
\end{Conj}

In fact, this conjecture is also related to a conjecture in \cite{HMTY} about the possible orbits on $\widehat{\QQ}(\cos(\frac{\pi}{2n+1}))$ under the projective action of the Hecke triangle group $H_{2n+1}$. Once again, the behaviour appears to be very different for the double hendecagon : there seems to be directions in the trace field which never terminates for the gcd algorithm. \newline

Another interesting corollary of this result is related to billiards trajectories and has been suggested to the author by C.~McMullen. Recall that the double heptagon surface arises from the unfolding of the triangular billiard with angles $(\frac{\pi}{2}, \frac{\pi}{7}, \frac{5\pi}{14})$. The green separatrix in the proof of Proposition \ref{Green_sepa} is the lift of a vertex-to-vertex trajectory, drawn in Figure \ref{billiard}. In particular, there exists vertex-to-vertex trajectories whose direction are not parabolic (which means there also exists a billiard trajectory in this direction which equidistributes). 

\begin{figure}[h]
\center
\includegraphics[height = 3cm]{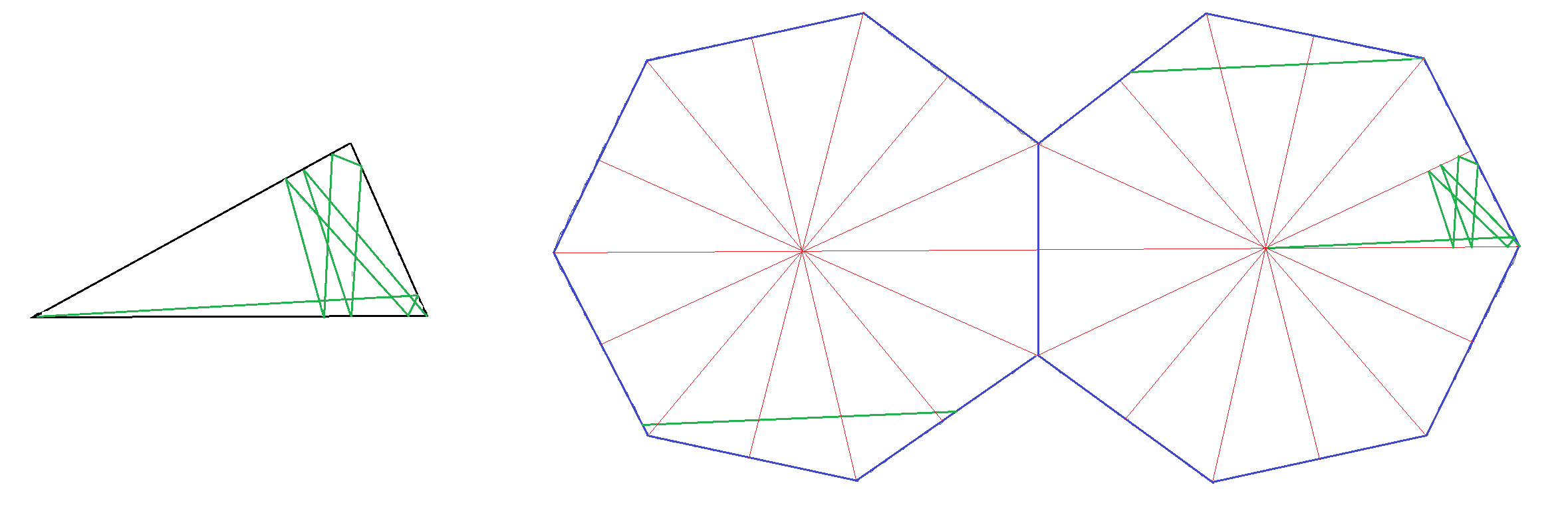}
\caption{The green vertex-to-vertex trajectory on the triangular billiard unfolds to a directionnal trajectory whose direction is hyperbolic according to Section \ref{Connex}.}
\label{billiard}
\end{figure}
\selectlanguage{english}

\bibliographystyle{alpha}
\bibliography{bibli}
\end{document}